\newtheorem{theorem}{Theorem}[section]
\newtheorem{corollary}[theorem]{Corollary}
\newtheorem{lemma}[theorem]{Lemma}
\theoremstyle{definition}
\newtheorem{definition}[theorem]{Definition}
\theoremstyle{remark}
\newtheorem{remark}[theorem]{Remark}
\numberwithin{equation}{section}
\newcommand{\mct}{\mathfrak{T}}
\DeclareMathOperator{\diam}{diam}
\newcommand{\Sp}[1]{\operatorname{Sp}(#1)}
\def\we{\mathrel{\stackrel{\rm w}=}}
\begin{document}

\title[Weak similarities of finite ultrametric spaces]
{Weak similarities of finite\\ ultrametric and semimetric spaces}

\author{Evgeniy Petrov}

\address{Institute of Applied Mathematics and Mechanics of NASU, Dobrovolskogo str. 1, Slovyansk 84100, Ukraine}

\email{eugeniy.petrov@gmail.com}

\subjclass[2010]{Primary 54E35, 05C05;}

\keywords{finite ultrametric space, finite semimetric space, Hasse diagram, weak similarity, representing tree}

\thanks{This is a preprint of the work accepted for publication in ``p-Adic Numbers, Ultrametric Analysis and Applications'', \copyright,  copyright 2018, the copyright holder is indicated in the Journal. \url{http://pleiades.online}}

\begin{abstract}
Weak similarities form a special class of mappings between semimetric spaces. Two semimetric spaces $X$ and $Y$ are weakly similar if there exists a weak similarity $\Phi\colon X\to Y$. We find a structural characteristic of finite ultrametric spaces for which the isomorphism of its representing trees implies a weak similarity of the spaces. We also find conditions under which the Hasse diagrams of balleans of finite semimetric spaces are isomorphic.
\end{abstract}

\maketitle
\section{Introduction}

The notion of weak similarity of semimetric spaces was introduced in~\cite{DP2},  where also some properties of these mappings were studied. It was shown that the weak similarities between geodesic spaces are usual similarities and every weak similarity  between compact ultrametric spaces with equal distance sets is an isometry. Some conditions under which weak similarities are homeomorphisms or uniform equivalences were also found.

A class $\mathfrak R$ of finite ultrametric spaces which are ``as rigid as possible'' was considered in~\cite{DPT(Howrigid)}. It was proved that a metric space that is weakly similar to a space from the class $\mathfrak R$ also belongs to $\mathfrak R$. It was shown that a weak similarity of two spaces $X, Y \in \mathfrak R$ is equivalent to the isomorphism of representing trees of these spaces as well as to the equality $|X|=|Y|$.

The concept of weak similarity is closely connected to such concepts as the ordinal scaling, the multidimensional scaling and the ranking which have a lot of applications, see e.g.~\cite{AWC07, BG05, JN11, LK14, K64, QY04, RF06, Sh62, Sh66, WJJ13}. The presence of these relationships and, thus, the existence of potential applications, makes the study of weak similarities much more important.

In this paper we continue to study weak similarities between finite semimetric and in particular between finite ultrametric spaces. In Theorem~\ref{t2} we find a characterization of finite ultrametric spaces in terms of its representing trees for which the isomorphism of representing trees implies a weak similarity of the spaces. In Theorem~\ref{t3} we study the same question with an additional condition of injective labeling of representing trees. In Section~\ref{s3} we prove that a weak similarity of two finite semimetric spaces implies the isomorphism of Hasse diagrams of these space, see Theorem~\ref{t4}. We also show that Hasse diagrams of two finite semimetric spaces are isomorphic as directed graphs if and
only if there exists a bijective ball-preserving mapping between these spaces, see Theorem~\ref{t5}.

Recall some definitions from the theory of metric spaces and the graph theory.
An \textit{ultrametric} on a set $X$ is a function $d\colon X\times X\rightarrow \mathbb{R}^+$, $\mathbb R^+ = [0,\infty)$, such that for all $x,y,z \in X$:
\begin{itemize}
\item [(i)] $d(x,y)=d(y,x)$,
\item [(ii)] $(d(x,y)=0)\Leftrightarrow (x=y)$,
\item [(iii)] $d(x,y)\leq \max \{d(x,z),d(z,y)\}$.
\end{itemize}
Inequality (iii)  is often called the {\it strong triangle inequality}.
The pair $(X,d)$ is called an \emph{ultrametric space.} If condition (iii) is omitted, then $(X,d)$ is a \emph{semimetric space}, see for example~\cite{Bl}. It is clear that every ultrametric space is semimetric space.
The \emph{spectrum} of an ultrametric space $(X,d)$ is the set  $$\operatorname{Sp}(X)=\{d(x,y)\colon x,y \in  X\}.$$
For simplicity we will always assume that $X\cap \Sp{X}=\varnothing$. The quantity
$$
\diam X=\sup\{d(x,y)\colon x,y\in X\}.
$$
is the \emph{diameter} of the space $(X,d)$.

Recall that a \textit{graph} is a pair $(V,E)$ consisting of a nonempty set $V$ and a (probably empty) set $E$  elements of which are unordered pairs of different points from $V$. For a graph $G=(V,E)$, the sets $V=V(G)$ and $E=E(G)$ are called \textit{the set of vertices} and \textit{the set of edges}, respectively. Recall that a \emph{path} is a nonempty graph $P=(V,E)$ of the form
$$
V=\{x_0,x_1,...,x_k\}, \quad E=\{\{x_0,x_1\},...,\{x_{k-1},x_k\}\},
$$
where all $x_i$ are distinct.
A connected graph without cycles is called a \emph{tree}. A tree $T$ may have a distinguished vertex called the \emph{root}; in this case $T$ is called a \emph{rooted tree}.  An \emph{$n$-ary tree}~--- is a rooted tree, such that the degree of each of its vertices is at most $n+1$. A rooted tree is \emph{strictly binary} if every internal node has exactly two children.
By $L_T$ we denote the set of leaves of the tree $T$. Generally we  follow terminology used in~\cite{BM}.

\begin{definition}\label{def3.1}
Let $k\geqslant 2$. A nonempty graph $G$ is called \emph{complete $k$-partite} if its vertices can be divided into $k$ disjoint nonempty sets $X_1,...,X_k$ so that there are no edges joining the vertices of the same set $X_i$ and any two vertices from different $X_i,X_j$, $1\leqslant i,j \leqslant k$ are adjacent. In this case we write $G=G[X_1,...,X_k]$.
\end{definition}
We shall say that $G$ is a {\it complete multipartite graph} if there exists
 $k \geqslant 2$ such that $G$ is complete $k$-partite, cf. \cite{Di}.

\begin{definition}[\!\cite{DDP(P-adic)}]\label{d2}
Let $(X,d)$ be a finite ultrametric space. Define the graph $G_X^d$ as follows $V(G_X^d)=X$ and
$$
(\{u,v\}\in E(G_X^d))\Leftrightarrow(d(u,v)=\diam X).
$$
We call $G_X^d$ the \emph{diametrical graph} of $X$.
\end{definition}

\begin{theorem}[\!\cite{DDP(P-adic)}]\label{t13}
Let $(X,d)$ be a finite ultrametric space, $|X|\geqslant 2$. Then $G_X^d$ is complete multipartite.
\end{theorem}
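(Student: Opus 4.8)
The plan is to show that ``being non-adjacent in $G_X^d$'' (with the diagonal added) is an equivalence relation on $X$; the partition of $X$ into the classes of this relation will then exhibit $G_X^d$ as a complete multipartite graph directly from Definition~\ref{def3.1}.

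First I would introduce the relation $R$ on $X$ defined by $uRv$ if and only if $u=v$ or $d(u,v)<\diam X$. Since $d(u,v)\le\diam X$ for all $u,v\in X$, for distinct $u,v$ the condition $\{u,v\}\notin E(G_X^d)$ is exactly $d(u,v)<\diam X$, so $R$ is precisely the relation ``$u=v$ or $\{u,v\}\notin E(G_X^d)$''. Reflexivity and symmetry of $R$ are immediate from the definition and from axiom (i) of an ultrametric.

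The key step is transitivity of $R$, and this is where the strong triangle inequality (iii) does the work. If $uRv$ and $vRw$ with $u,v,w$ pairwise distinct, then $d(u,v)<\diam X$ and $d(v,w)<\diam X$, so $d(u,w)\le\max\{d(u,v),d(v,w)\}<\diam X$, i.e.\ $uRw$; the cases in which two of the three points coincide are trivial. Hence $R$ is an equivalence relation; let $X_1,\dots,X_k$ be its classes, which are nonempty and partition $X$. Two distinct points of the same class $X_i$ satisfy $d<\diam X$, so they are non-adjacent in $G_X^d$; two points lying in different classes $X_i,X_j$ are distinct and are not $R$-related, hence $d(u,v)\ge\diam X$, forcing $d(u,v)=\diam X$, so they are adjacent. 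Finally, since $|X|\ge 2$ and $X$ is finite, the supremum $\diam X$ is attained by some pair $x,y$, which necessarily lie in different $R$-classes; thus $k\ge 2$ and $G_X^d=G[X_1,\dots,X_k]$ is complete multipartite.

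I do not expect a genuine obstacle here: beyond routine bookkeeping the argument needs only the single invocation of the strong triangle inequality that yields transitivity. The one point deserving a word of care is the requirement $k\ge 2$ in Definition~\ref{def3.1}, which is exactly what the existence of a diametrical pair in a finite space with $|X|\ge 2$ guarantees.
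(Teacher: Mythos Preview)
Your argument is correct and is essentially the standard proof: the relation ``distance strictly less than $\diam X$'' (together with the diagonal) is an equivalence relation by the strong triangle inequality, and the resulting classes give the parts of $G_X^d$. Note, however, that the present paper does not actually prove Theorem~\ref{t13}; it is quoted from~\cite{DDP(P-adic)} and used as a black box in the construction of the representing tree, so there is no ``paper's own proof'' to compare against here. Your write-up would serve perfectly well as a self-contained proof of the cited result.
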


With every finite ultrametric space $(X, d)$, we can associate  a labeled rooted $n$-ary tree $T_X$ by the following rule (see~\cite{PD(UMB)}). If $X=\{x\}$ is a one-point set, then $T_X$ is the rooted tree consisting of one node $X$ labeled by $0$. Let $|X|\geqslant 2$.
According to Theorem~\ref{t13} we have $G^d_X = G^d_X[X_1,...,X_k]$.
In this case the root $X$ of the tree $T_X$ is labeled by $\diam X$ and, moreover, $T_X$ has $k$ nodes $X_1,...,X_k$ of the first level with the labels
\begin{equation}\label{e2.7}
l(X_i)=
\diam X_i, \quad i = 1,...,k.
\end{equation}
The nodes of the first level indicated by $0$ are leaves, and those indicated by strictly positive numbers are internal nodes of the tree $T_X$. If the first level has no internal nodes, then the tree $T_X$ is constructed. Otherwise, by repeating the above-described procedure with $X_i$ corresponding to the internal nodes of the first level, we obtain the nodes of the second level, etc. Since $|X|$ is finite, all vertices on some level will be leaves and the construction of $T_X$ is completed.

The above-constructed labeled tree $T_X$ is called the \emph{representing tree} of the space $(X, d)$. To underline that $l_X(x)$ is a labeling function of the representing tree $T_X$ we shall write $(T_X,l_X)$. The rooted tree $T_X$ without the labels we will denote by $\overline{T}_X$.

Note that the correspondence between trees and ultrametric spaces is well known, cf.~\cite{GV, GNS00,GurVyal(2012),H04}.

\begin{definition}
Let $T_1$ and $T_2$ be rooted trees with the roots $v_1$ and $v_2$ respectively. A bijective function $\Psi\colon V(T_1)\to V(T_2)$ is an isomorphism of $T_1$ and $T_2$ if
$$
(\{x,y\}\in E(T_1))\Leftrightarrow(\{\Psi(x),\Psi(y)\}\in E(T_2))
$$
for all $x,y \in V(T_1)$ and $\Psi(v_1)=v_2$. If there exists an isomorphism of rooted trees $T_1$ and $T_2$, then we will write $T_1\simeq T_2$.
\end{definition}

We shall say that trees $(T_X,l_X)$ and $(T_Y,l_Y)$ are isomorphic as labeled rooted trees if  $\overline{T}_X \simeq \overline{T}_Y$ with isomorphism $\Psi$ and $l_X(x)=l_Y(\Psi(x))$.

\section{Weak similarities of finite ultrametric spaces}

Recall that a mapping $\Phi$ from a metric space $(X, d)$ to a metric space $(Y, \rho)$ is a \emph{similarity} if there is $\lambda>0$ such that
$$
\lambda(d(x,y)) = \rho(\Phi(x),\Phi(y))
$$
for all $x$, $y \in X$.

\begin{definition}\label{d4.5}
Let  $(X,d)$ and $(Y,\rho)$ be semimetric spaces. A bijective mapping $\Phi\colon X\to Y$ is a \emph{weak similarity} if there exists a strictly increasing bijection $f\colon \Sp{X}\to \Sp{Y}$ such that the equality
\begin{equation}\label{e4.5}
f(d(x,y))=\rho(\Phi(x),\Phi(y))
\end{equation}
holds for all $x$, $y\in X$. The function $f$ is said to be a \emph{scaling function} of $\Phi$. If $\Phi\colon X\to Y$ is a weak similarity, we write $X \we Y$ and say that $X$ and  $Y$  are \emph{weakly similar}. The pair $(f,\Phi)$ is called a \emph{realization} of $X\we Y$.
\end{definition}

In~\cite{DP2} the notion of weak similarity was introduced in a slightly different form.

\begin{remark}\label{r2}
The pair $(f,\Phi)$ is a realization of $(X,d)\we (Y,\rho)$ if and only if $(X,f\circ d)$ and $(Y,\rho)$ are isometric with the isometry $\Phi$.
\end{remark}

\begin{theorem}[\!\cite{DPT(Howrigid)}] \label{t2.6}
Let $(X, d)$ and $(Y, \rho)$ be finite nonempty ultrametric spaces. Then $(X, d)$ and $(Y, \rho)$ are isometric if and only if $T_X$ and $T_Y$ are isomorphic as labeled rooted trees.
\end{theorem}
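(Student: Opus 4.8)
The plan is to prove Theorem~\ref{t2.6} by induction on $|X|$, exploiting the recursive nature of the representing tree construction together with the structure of the diametrical graph given by Theorem~\ref{t13}.

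First I would dispose of the easy direction. If $\Phi\colon X\to Y$ is an isometry, then for $|X|=1$ both trees are single nodes labeled $0$, and for $|X|\geqslant 2$ the isometry preserves distances, hence $\diam X=\diam Y$ and $\{u,v\}\in E(G_X^d)$ iff $\{\Phi(u),\Phi(v)\}\in E(G_Y^d)$; so $\Phi$ carries the multipartite decomposition $G_X^d=G_X^d[X_1,\dots,X_k]$ onto a multipartite decomposition of $G_Y^d$ into the same number $k$ of parts, with $\Phi(X_i)$ isometric to $X_i$ for each $i$ (after relabeling). An induction on the number of points (each $X_i$ has strictly fewer points when $k\geqslant 2$, and when $k=1$ one notes $G_X^d$ complete multipartite forces each part to be a singleton, so $X$ itself is a single point up to a trivial case — more precisely the construction handles this via the leaves) gives $T_{X_i}\simeq T_{\Phi(X_i)}$ as labeled trees; since the root labels $\diam X=\diam Y$ agree and the first-level labels $l(X_i)=\diam X_i$ are matched, assembling the subtrees yields $(T_X,l_X)\simeq(T_Y,l_Y)$.

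For the converse, suppose $\Psi\colon V(\overline T_X)\to V(\overline T_Y)$ is a rooted-tree isomorphism with $l_X=l_Y\circ\Psi$. The key observation is that the representing tree determines the ultrametric: the distance between two leaves $x,y\in X=L_{T_X}$ equals the label of their nearest common ancestor in $T_X$. I would verify this claim by induction following the construction — two points lie in different parts $X_i,X_j$ of the top-level partition iff their nearest common ancestor is the root, in which case $d(x,y)=\diam X=l(X)$; otherwise they lie in a common $X_i$ and the claim follows from the inductive hypothesis applied to $T_{X_i}$. Granting this, $\Psi$ restricts to a bijection $\Phi\colon L_{T_X}\to L_{T_Y}$, i.e. a bijection $X\to Y$, and since $\Psi$ preserves the tree structure it sends the nearest common ancestor of $x,y$ to the nearest common ancestor of $\Phi(x),\Phi(y)$; combined with $l_X=l_Y\circ\Psi$ this gives $d(x,y)=l_X(\mathrm{nca}(x,y))=l_Y(\mathrm{nca}(\Phi(x),\Phi(y)))=\rho(\Phi(x),\Phi(y))$, so $\Phi$ is an isometry.

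The main obstacle, and the step deserving the most care, is the bookkeeping in the inductive assembly: one must ensure that the labeled subtrees hanging at the first-level nodes are matched up correctly by $\Psi$ (respecting which subtree goes to which), and one must treat carefully the degenerate cases where some $X_i$ are singletons (leaves labeled $0$) versus genuine subspaces, since Theorem~\ref{t13} only guarantees $G_X^d$ is complete multipartite and says nothing further about the sizes of the parts. Everything else is a routine unwinding of the two recursive definitions, and the ``nearest common ancestor'' lemma is the conceptual heart that makes both directions transparent.
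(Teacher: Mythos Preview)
The paper does not actually prove Theorem~\ref{t2.6}; it is stated as a citation from~\cite{DPT(Howrigid)} and used as a black box. So there is no ``paper's own proof'' to compare against.

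That said, your argument is correct and is the natural one. A couple of remarks. First, your worry about the case $k=1$ is unfounded: by Definition~\ref{def3.1} a complete multipartite graph has $k\geqslant 2$, so for $|X|\geqslant 2$ Theorem~\ref{t13} always gives at least two parts and the induction goes through without special cases. Second, the ``nearest common ancestor'' claim you isolate as the conceptual heart is essentially Lemma~\ref{l2} already quoted in the paper: the maximum label along the path joining two leaves is precisely the label of their nearest common ancestor, because labels strictly decrease along root-to-leaf paths (each $X_i$ is contained in a single part of $G_X^d$, so $\diam X_i<\diam X$). You could simply invoke Lemma~\ref{l2} rather than reprove it. With those two simplifications your write-up would be entirely clean.
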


It is easy to see that for finite ultrametric spaces $X$ and $Y$ the isomorphism of $T_X$ and $T_Y$ does not imply that $X$ and $Y$ are weakly similar. But the converse is true.
\begin{lemma}\label{p2.5}
Let $(X,d)$ and $(Y,\rho)$ be finite ultrametric spaces. If $X\we Y$ with realization $(f,\Phi)$ then the following statements hold.
\begin{itemize}
  \item [(i)] The representing tree of the ultrametric space $(X,f\circ d)$ is isomorphic to $T_Y$.
  \item [(ii)] $\overline{T}_X \simeq \overline{T}_Y$.
\end{itemize}
\end{lemma}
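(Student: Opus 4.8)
The plan is to prove (i) first and then deduce (ii) as an easy corollary. For (i), I would invoke Remark~\ref{r2}: since $(f,\Phi)$ is a realization of $X\we Y$, the space $(X,f\circ d)$ is isometric to $(Y,\rho)$ via $\Phi$. Then Theorem~\ref{t2.6} applies to the two isometric finite ultrametric spaces $(X,f\circ d)$ and $(Y,\rho)$, giving that their representing trees are isomorphic as labeled rooted trees; in particular the underlying rooted trees are isomorphic. The only point requiring a small check is that $f\circ d$ is genuinely an ultrametric on $X$: symmetry and positive definiteness are immediate since $f$ is a strictly increasing bijection fixing $0$ (it maps $\Sp X$ to $\Sp Y$, both of which contain $0$ as the distance of a point to itself, so $f(0)=0$), and the strong triangle inequality is preserved because $f$ is monotone — applying $f$ to $d(x,y)\le\max\{d(x,z),d(z,y)\}$ and using that $f$ commutes with $\max$ on its domain yields $f(d(x,y))\le\max\{f(d(x,z)),f(d(z,y))\}$. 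Hence $T_{(X,f\circ d)}$ is well defined, and $T_{(X,f\circ d)}\simeq T_Y$ as labeled rooted trees, which gives statement (i).

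For (ii), I would argue that the underlying rooted tree $\overline{T}_X$ depends only on the structure of the balls of $X$ up to the ordering induced by the distances, and this structure is invariant under applying a strictly increasing $f$. Concretely, the construction of $T_X$ via Definition~\ref{d2} and Theorem~\ref{t13} uses only which pairs realize the diameter at each recursive stage; replacing $d$ by $f\circ d$ replaces $\diam X$ by $f(\diam X)$ but leaves the set $\{\{u,v\}\colon d(u,v)=\diam X\}$ unchanged, so the diametrical graph $G^d_X$ equals $G^{f\circ d}_X$, hence the same partition $X_1,\dots,X_k$ arises, and recursively the same unlabeled tree is produced. Therefore $\overline{T}_X = \overline{T}_{(X,f\circ d)}$, and combining with (i) gives $\overline{T}_X \simeq \overline{T}_Y$.

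The main obstacle, such as it is, is the bookkeeping in the second paragraph: one must be careful that the recursive subdivision really does produce the identical balls after rescaling, which amounts to observing that $f$ restricted to any subspace is again a strictly increasing bijection onto the spectrum of the rescaled subspace, so the argument goes through level by level by induction on $|X|$. Alternatively — and perhaps more cleanly — (ii) can be obtained directly from (i) together with the labeled-tree isomorphism of Theorem~\ref{t2.6} simply by forgetting the labels, so that the inductive argument is not strictly necessary and the whole lemma reduces to Remark~\ref{r2} plus Theorem~\ref{t2.6}.
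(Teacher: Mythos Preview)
Your argument is correct and follows the same route as the paper: (i) is exactly Remark~\ref{r2} plus Theorem~\ref{t2.6}, and (ii) is obtained by forgetting labels and observing that $\overline{T}_X=\overline{T}_{(X,f\circ d)}$, which the paper simply asserts as ``easy to see'' while you supply the recursive justification via the invariance of the diametrical graph under $f$.

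One small point: your final ``alternative'' is not actually a shortcut. Forgetting labels in (i) only yields $\overline{T}_{(X,f\circ d)}\simeq\overline{T}_Y$, not $\overline{T}_X\simeq\overline{T}_Y$; the step $\overline{T}_X\simeq\overline{T}_{(X,f\circ d)}$ is still needed, and that is precisely what your recursive argument (or a one-line induction on $|X|$) establishes. So the inductive observation is not optional---it is the content of the paper's ``it is easy to see''---but your main line already handles it correctly.
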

\begin{proof}
Statement (i) follows from Remark~\ref{r2} and Theorem~\ref{t2.6}.
Let $X'=(X,f\circ d)$. The isomorphism of labeled trees $T_Y$ and $T_{X'}$ implies $\overline{T}_Y\simeq \overline{T}_{X'}$. It is easy to see that $\overline{T}_{X'}\simeq \overline{T}_X$ which establish statement (ii).
\end{proof}

In what follows we need the following lemma.
\begin{lemma}[\!~\cite{PD(UMB)}]\label{l2} Let $(X, d)$ be a finite ultrametric space and let $x_1$ and $x_2$ be two different leaves of the tree $T_X$. If
$(x_1, v_1, . . . , v_n, x_2)$ is the path joining the leaves $x_1$ and $x_2$ in $T_X$, then
\begin{equation}\label{e1}
d(x_1, x_2) = \max\limits_{1\leqslant i \leqslant n} l({v}_i).
\end{equation}
\end{lemma}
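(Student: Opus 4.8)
The plan is to read off both sides of \eqref{e1} directly from the recursive construction of $T_X$. In that construction every node of $T_X$ is a subset of $X$: the root is $X$, the leaves are the singletons (which we identify with the corresponding points of $X$), every non-leaf node $A$ carries the label $l(A)=\diam A$, and the children of such an $A$ are exactly the parts $A_1,\dots,A_m$ of the decomposition $G_A^d=G_A^d[A_1,\dots,A_m]$ furnished by Theorem~\ref{t13}; in particular the children of $A$ partition $A$. From this it follows, by an easy induction along the levels, that the nodes of $T_X$ containing a given point $x$ form precisely the chain of ancestors of the leaf $\{x\}$, and that for a child $A_i$ of $A$ one has $\diam A_i<\diam A$, since no two points of the same part $A_i$ are joined by an edge of $G_A^d$ and hence their distance is strictly less than $\diam A$.

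First I would single out the vertex $v_p$ of the path $(x_1,v_1,\dots,v_n,x_2)$ that is closest to the root of $T_X$; this is the least common ancestor of the leaves $x_1$ and $x_2$, equivalently the smallest (with respect to inclusion) node of $T_X$ containing both $x_1$ and $x_2$. Here $n\geqslant 1$, because two distinct leaves are never adjacent: the parent of a leaf is a non-leaf node, hence has at least two elements and cannot be a singleton. Every remaining vertex $v_i$ of the path is a proper descendant of $v_p$, so $v_i\subsetneq v_p$ and therefore $l(v_i)=\diam v_i<\diam v_p=l(v_p)$. Consequently $\max_{1\leqslant i\leqslant n}l(v_i)=l(v_p)=\diam v_p$.

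It remains to show that $d(x_1,x_2)=\diam v_p$. Put $A=v_p$ and let $A_1,\dots,A_m$ be its children. Since $A$ is the smallest node containing both $x_1$ and $x_2$, these points lie in different children, say $x_1\in A_a$ and $x_2\in A_b$ with $a\neq b$; by the complete multipartite structure $G_A^d=G_A^d[A_1,\dots,A_m]$ this means $\{x_1,x_2\}\in E(G_A^d)$, which by Definition~\ref{d2} is precisely the equality $d(x_1,x_2)=\diam A$. Combining the two computations yields \eqref{e1}. The only step that needs genuine care is the identification of the ``peak'' $v_p$ of the path with the minimal node containing $\{x_1,x_2\}$ and the consequent fact that all the other $v_i$ are contained in it; everything else is bookkeeping, and notably the strong triangle inequality is not invoked, only the multipartite structure of the diametrical graphs that drives the construction of $T_X$. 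Alternatively, one can prove \eqref{e1} by induction on $|X|$: if $x_1,x_2$ lie in the same child $X_i$ of the root one applies the inductive hypothesis inside $X_i$ together with $\diam X_i<\diam X$, and otherwise $d(x_1,x_2)=\diam X$ with the root attaining the maximum.
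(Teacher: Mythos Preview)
The paper does not contain its own proof of this lemma; it is quoted from \cite{PD(UMB)} and used as a black box, so there is nothing in the present paper to compare your argument against.

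That said, your proof is correct. Identifying the vertex $v_p$ of the path closest to the root as the least common ancestor of $x_1$ and $x_2$, and then observing that every other $v_i$ on the path is a proper descendant of $v_p$ (hence $l(v_i)=\diam v_i<\diam v_p=l(v_p)$, the strictness coming from the fact that points in the same part of $G_A^d$ are at distance strictly less than $\diam A$), gives $\max_i l(v_i)=l(v_p)$. The remaining equality $d(x_1,x_2)=\diam v_p$ follows exactly as you say: since $v_p$ is the \emph{smallest} node containing both points, they lie in distinct children of $v_p$ and hence form an edge of $G_{v_p}^d$. Your side remark that $n\geqslant 1$ is also correct and worth keeping. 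The alternative inductive argument you sketch is equally valid and is in fact the style of argument used in the original reference.
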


A class $\mathfrak R$ of finite ultrametric spaces which are ``as rigid as possible'' was considered in~\cite{DPT(Howrigid)}. It was established that the representing tree $T_X$ of the space $X\in \mathfrak R$ is strictly binary with exactly one inner node at each level except the last level. Denote by $\tilde{\mathfrak R}$ the class of finite ultrametric spaces $X$ for which $T_X$ has exactly one inner node at each level expect the last level. It is clear that $\mathfrak R$ is a proper subclass of $\tilde{\mathfrak R}$.

\begin{figure}[h]
\begin{tikzpicture}[scale=0.7]

\draw (-1,8) node [right] {$T_X$};

 \foreach \i in {(1,8),(0,6),(1,6),(2,6),(1,4),(1+2/3,4),(1+4/3,4),(3,4),(2,2),(3,2),(4,2),(3,0),(4,0),(5,0)}
  \fill[black] \i circle (2pt);

\draw (1,8)  node [right] {$l_0$}  -- (0,6);
\draw (1,8)  -- (1,6);
\draw (1,8)  -- (2,6) node [right] {$l_{1}$};

\draw (2,6)  -- (1,4);
\draw (2,6)  -- (1+2/3,4);
\draw (2,6)  -- (1+4/3,4);
\draw[dotted] (2,6)  -- (3,4) node [right] {$l_{k-1}$};

\draw (3,4)  -- (2,2);
\draw (3,4)  -- (3,2);
\draw (3,4)  -- (4,2) node [right] {$l_{k}$};

\draw (4,2)  -- (3,0);
\draw (4,2)  -- (4,0);
\draw (4,2)  -- (5,0);

\foreach \j in {6}
{

\draw (-1+\j,8) node [right] {$T_Y$};
 \foreach \i in {(1+\j,8),(0+\j,6),(1+\j,6),(2+\j,6),(1+\j,4),(1+2/3+\j,4),(1+4/3+\j,4),(3+\j,4),(2+\j,2),(3+\j,2),(4+\j,2),(3+\j,0),(4+\j,0),(5+\j,0)}
  \fill[black] \i circle (2pt);

\draw (1+\j,8)  node [right] {$\tilde{l}_0$}  -- (0+\j,6);
\draw (1+\j,8)  -- (1+\j,6);
\draw (1+\j,8)  -- (2+\j,6) node [right] {$\tilde{l}_{1}$};

\draw (2+\j,6)  -- (1+\j,4);
\draw (2+\j,6)  -- (1+2/3+\j,4);
\draw (2+\j,6)  -- (1+4/3+\j,4);
\draw[dotted] (2+\j,6)  -- (3+\j,4) node [right] {$\tilde{l}_{k-1}$};

\draw (3+\j,4)  -- (2+\j,2);
\draw (3+\j,4)  -- (3+\j,2);
\draw (3+\j,4)  -- (4+\j,2) node [right] {$\tilde{l}_{k}$};

\draw (4+\j,2)  -- (3+\j,0);
\draw (4+\j,2)  -- (4+\j,0);
\draw (4+\j,2)  -- (5+\j,0);
}
\end{tikzpicture}
\caption{$X\we Y$ with $\overline{T}_X\simeq \overline{T}_Y$}\label{f4}
\label{fig1}
\end{figure}

The next theorem gives a description of finite ultrametric spaces for which the isomorphism of representing  trees implies the weak similarity of the spaces.
\begin{theorem}\label{t2}
Let $X$ be a finite ultrametric space. Then the following statements are equivalent.
\begin{itemize}
  \item [(i)] The implication $(\overline{T}_X\simeq \overline{T}_Y) \Rightarrow (X\we Y)$ holds for every finite ultrametric space $Y$.
  \item [(ii)] $X\in \tilde{\mathfrak{R}}$.
\end{itemize}
\end{theorem}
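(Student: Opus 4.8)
The plan is to reduce $X\we Y$ to a statement about labeled representing trees and then prove the two implications separately. Combining Remark~\ref{r2}, Theorem~\ref{t2.6} and Lemma~\ref{p2.5}(i), I would first establish the following reformulation: for finite ultrametric spaces $(X,d)$ and $(Y,\rho)$ one has $X\we Y$ if and only if there exist an isomorphism of rooted trees $\Psi\colon\overline{T}_X\to\overline{T}_Y$ and a strictly increasing bijection $f\colon\Sp{X}\to\Sp{Y}$ with $f(l_X(v))=l_Y(\Psi(v))$ for all $v\in V(T_X)$. The key observation is that for strictly increasing $f$ the diametrical graph of $(X,f\circ d)$ coincides with that of $(X,d)$ (and the same holds on every ball), so the representing tree of $(X,f\circ d)$ is exactly $\overline{T}_X$ carrying the labeling $f\circ l_X$; Theorem~\ref{t2.6} then identifies this labeled tree with $T_Y$ precisely when a suitable $\Psi$ and $f$ exist, and Remark~\ref{r2} converts an isometry $(X,f\circ d)\to(Y,\rho)$ into a realization of $X\we Y$. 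I would also record, directly from Lemma~\ref{l2} and the fact that labels strictly decrease along every root-to-leaf path of $T_X$, that $d(x,y)=l_X(v)$ with $v$ the deepest common ancestor of the leaves $x,y$, so $\Sp{X}=\{0\}\cup\{l_X(v):v\text{ an internal node of }T_X\}$ and $|\Sp{X}|-1$ equals the number of distinct labels of internal nodes.

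For the direction (ii)$\Rightarrow$(i), assume $X\in\tilde{\mathfrak{R}}$ and $\overline{T}_Y\simeq\overline{T}_X$; since membership in $\tilde{\mathfrak{R}}$ depends only on the isomorphism type of the unlabeled rooted tree, $Y\in\tilde{\mathfrak{R}}$ as well. Hence the internal nodes of $T_X$ form a chain $v_0\prec v_1\prec\dots\prec v_{d-1}$ ($v_0$ the root, $d$ the depth) with strictly decreasing labels $a_0>a_1>\dots>a_{d-1}>0$, and by the previous paragraph $\Sp{X}=\{0,a_{d-1},\dots,a_0\}$; likewise the internal nodes of $T_Y$ form a chain $w_0\prec\dots\prec w_{d-1}$ with labels $b_0>\dots>b_{d-1}>0$ and $\Sp{Y}=\{0,b_{d-1},\dots,b_0\}$. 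Any isomorphism $\Psi\colon\overline{T}_X\to\overline{T}_Y$ sends internal nodes to internal nodes, maps root to root, and preserves the ancestor order, so $\Psi(v_i)=w_i$ for every $i$. Then $f\colon\Sp{X}\to\Sp{Y}$ given by $f(0)=0$ and $f(a_i)=b_i$ is a strictly increasing bijection with $f\circ l_X=l_Y\circ\Psi$, and $X\we Y$ by the reformulation.

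For the direction (i)$\Rightarrow$(ii) I would argue by contraposition: let $X\notin\tilde{\mathfrak{R}}$, and denote by $N$ the number of internal nodes of $T_X$ and by $d$ its depth. Each level $0,1,\dots,d-1$ contains at least one internal node (the one on a longest branch), so $N\geqslant d$, with equality exactly when every such level has exactly one internal node, i.e. exactly when $X\in\tilde{\mathfrak{R}}$; thus $N>d$. Next I would use that any function on $V(\overline{T}_X)$ which vanishes precisely on the leaves and strictly decreases along every root-to-leaf path is the labeling of the representing tree of the ultrametric space on $L_{\overline{T}_X}$ whose distance is the maximum of that function over the internal nodes on a path: this follows by rerunning the construction of the representing tree, since the diametrical graph splits the leaf set exactly according to the subtrees hanging from the children of the root (the root carrying the unique maximal label), and one recurses on the subtrees. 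Taking such a labeling that assigns $N$ pairwise distinct values to the internal nodes while respecting the ancestor order (a linear extension), we obtain a space $Y$ with $\overline{T}_Y\simeq\overline{T}_X$ and $|\Sp{Y}|=N+1$; taking instead the ``height'' labeling $v\mapsto$ (length of the longest branch below $v$) we obtain such a $Y$ with $|\Sp{Y}|=d+1$. Since $d<N$ and $|\Sp{X}|-1\in\{d,\dots,N\}$, at least one of these two choices yields $|\Sp{Y}|\neq|\Sp{X}|$; for that $Y$ there is no bijection $\Sp{X}\to\Sp{Y}$, hence $X\not\we Y$ while $\overline{T}_Y\simeq\overline{T}_X$, so (i) fails. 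The case $|X|=1$ is trivial.

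The step I expect to be the main obstacle is the claim used in the reformulation and again in (i)$\Rightarrow$(ii) that relabeling $\overline{T}_X$---subject only to ``zero exactly on the leaves, strictly decreasing along paths''---yields a genuine representing tree with the \emph{same} unlabeled shape, i.e. that the new diametrical graphs do not merge branches and collapse the tree. The remaining steps are routine bookkeeping with Lemma~\ref{l2} and with the chain structure of trees in $\tilde{\mathfrak{R}}$.
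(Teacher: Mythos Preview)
Your argument is correct. The direction (ii)$\Rightarrow$(i) is essentially the paper's proof, recast through your preliminary reformulation of $X\we Y$ in terms of a tree isomorphism intertwining the labelings via a monotone bijection; the paper instead quotes Lemma~\ref{l2} directly, but the content is the same.

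The direction (i)$\Rightarrow$(ii) is where you genuinely diverge. The paper picks two inner nodes $x_1,x_2$ lying at the same level of $T_X$ and alters only the label at $\Psi(x_2)$: if $l_X(x_1)\neq l_X(x_2)$ it sets $l_Y(\Psi(x_2))=l_X(x_1)$ (merging two spectrum values), and if $l_X(x_1)=l_X(x_2)$ it replaces the common value by a fresh $a$ (splitting one spectrum value into two); either way $|\Sp{Y}|\neq|\Sp{X}|$, so $X\not\we Y$. Your route instead exhibits two global relabelings of $\overline{T}_X$---a linear extension with all internal labels distinct, and the height labeling---realizing the extreme spectrum sizes $N+1$ and $d+1$, and uses $d<N$ to force at least one of them to differ from $|\Sp{X}|$. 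Your construction is a bit heavier but has the advantage that the validity of each labeling (strict decrease along every root-to-leaf path) is immediate; in the paper's one-label modification, the case $l_X(x_1)\neq l_X(x_2)$ requires the new value $l_X(x_1)$ to sit strictly between the parent's and children's labels of $x_2$, a constraint that is not addressed there. Your flagged ``main obstacle''---that any labeling vanishing exactly on leaves and strictly decreasing along branches is the representing-tree labeling of an ultrametric on the leaf set with the \emph{same} unlabeled shape---is handled correctly by noting that in $\overline{T}_X$ every internal node has at least two children, so the diametrical graph at each stage is genuinely multipartite and no branches collapse.
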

\begin{proof}
(i)$\Rightarrow$(ii). Let condition (i) hold and let $\Psi\colon V(\overline{T}_X)\to V(\overline{T}_Y)$ be an isomorphism of $\overline{T}_X$ and $\overline{T}_Y$.  Suppose $X \notin \tilde{\mathfrak{R}}$. Consequently there exist at least two inner nodes $x_1, x_2 \in V(T_X)$ at one and the same level.
For every inner node $y\in V(T_Y)$ define a labeling by the following way
$$
l_Y(y)=
\begin{cases}
l_X(x), &\text{if } \, \ y=\Psi(x) \ \text{ and } \ x\neq x_2,\\
l_X(x_1), &\text{if } \, \ y=\Psi(x_2) \ \text{ and } \ l_X(x_1)\neq l_X(x_2),\\
a, &\text{if } \, \ y=\Psi(x_2) \ \text{ and } \ l_X(x_1)= l_X(x_2),
\end{cases}
$$
where $a$ is an appropriate positive real number such that $a\neq l_X(x_1)$ (it is always possible according to construction of representing tree).

It is clear that $X$ and $Y$ are not weakly similar because  $|\Sp{X}|\neq |\Sp{Y}|$ (see Definition~\ref{d4.5}).

(ii)$\Rightarrow$(i). Let $X\in \tilde{\mathfrak{R}}$ and let $Y$ be a finite ultrametric space such that $\overline{T}_X\simeq \overline{T}_Y$ with the isomorphism $\Psi$.
Since $X\in \tilde{\mathfrak{R}}$ all labels of inner nodes of $T_X$ are different. Analogically, $\overline{T}_X\simeq \overline{T}_Y$ implies $Y\in \tilde{\mathfrak{R}}$, so that the labels of inner nodes of $T_Y$ are also different. Moreover, the numbers of inner nodes of $T_X$ and $T_Y$ coincide since $T_X$ and $T_Y$ are isomorphic. Let $l_0,l_1,...,l_k$ and $\tilde{l}_0,\tilde{l}_1,...,\tilde{l}_k$ be the labels of inner nodes of $T_X$ and $T_Y$, respectively,  decreasingly ordered, see Figure~\ref{fig1}. It is clear that the labels $l_i$ and $\tilde{l}_i$ are on the same level. Define $f$ as follows
\begin{equation}\label{e18}
f(l_i)=\tilde{l}_i, \ \, i=0,...,k, \ \, f(0)=0.
\end{equation}
Define $\Phi = \Psi|_{L_{T_X}}$. Let us prove that $X\we Y$ with the realization $(f,\Phi)$. Without loss of generality,  suppose that $x$ is a direct successor of some inner node with label $l_i$ and $y$ is a direct successor of some inner node with the label $l_j$ where $i\leqslant j$. According to Lemma~\ref{l2} and to the construction of representing trees $T_X$ and $T_Y$ we have $d(x,y)=l_i$ and $\rho(\Phi(x), \Phi(y))=\tilde{l}_i$ where $d$ and $\rho$ are the ultrametrics on $X$ and $Y$ respectively. Using ~(\ref{e18}) we obtain the equality in~(\ref{e4.5}) which completes the proof.
\end{proof}

Denote by $\mathfrak D$ the class of all finite ultrametric spaces $X$ such that the different internal nodes of $T_X$ have the different labels. In this case we will say that $T_X$ has an \emph{injective labeling}. It is clear that $\mathfrak R$ and $\tilde{\mathfrak  R}$ are subclasses of $\mathfrak D$. A question arises whether there exist finite ultrametric spaces $X$, $Y\in \mathfrak D$ which do not belong to the class $\tilde{\mathfrak  R}$ and for which the isomorphism of $\overline{T}_X$ and $\overline{T}_Y$ implies $X\we Y$.

Let us define a rooted tree $T$  with $n$ levels by the following two conditions:

(A) There is only one inner node at the level $k$ of $T$ whenever $k<n-1$.

(B) If $u$ and $v$ are different inner nodes at the level $n-1$ then the numbers of offsprings of $u$ and $v$ are equal.

\begin{figure}[h]
\begin{tikzpicture}[scale=0.6]

\foreach \j in {4}
{
\foreach \i in {(1+\j,9),(0+\j,6), (1/2+\j,6), (1+\j,6), (3/2+\j,6), (2+\j,6)}
  \fill[black] \i circle (2pt);
\draw (1+\j,9)  -- (0+\j,6);
\draw (1+\j,9)  -- (1/2+\j,6);
\draw[dotted] (1+\j,9)  -- (1+\j,6);
\draw[dotted] (1+\j,9)  -- (3/2+\j,6);
\draw (1+\j,9)  -- (2+\j,6);
}

\foreach \j in {3}
{
\foreach \i in {(1+\j,12),(0+\j,9), (1/2+\j,9), (1+\j,9), (3/2+\j,9), (2+\j,9)}
  \fill[black] \i circle (2pt);
\draw (1+\j,12)  -- (0+\j,9);
\draw (1+\j,12)  -- (1/2+\j,9);
\draw[dotted] (1+\j,12)  -- (1+\j,9);
\draw[dotted] (1+\j,12)  -- (3/2+\j,9);
}

\foreach \j in {2}
{
\foreach \i in {(1+\j,15),(0+\j,12), (1/2+\j,12), (1+\j,12), (3/2+\j,12), (2+\j,12)}
  \fill[black] \i circle (2pt);
\draw (1+\j,15)  -- (0+\j,12);
\draw (1+\j,15)  -- (1/2+\j,12);
\draw[dotted] (1+\j,15)  -- (1+\j,12);
\draw[dotted] (1+\j,15)  -- (3/2+\j,12);
\draw (1+\j,15)  -- (2+\j,12);
}

\draw (3,15) node [right] {$r_{0}$};
\draw (4,12) node [right] {$r_{1}$};
\draw[dotted] (4,12)  -- (5,9) node [right] {$r_{k-1}$};
\draw (6,6.2) node [right] {$r_{k}$};

\draw (0,15) node [right] {$T$};
        \draw (6,6)  -- (1,3) node [left] {$\textstyle x_{1}$};
        \draw (6,6)  -- (4,3)  node [left] {$ \textstyle x_{2}$};
\draw[dotted] (6,6)  -- (4+4/3,3);
\draw[dotted] (6,6)  -- (4+8/3,3);
        \draw (6,6)  -- (8,3)  node [left] {$\textstyle x_{m}$};
        \draw (6,6)  -- (9,3);
\draw[dotted] (6,6)  -- (10,3);
\draw[dotted] (6,6)  -- (11,3);
        \draw (6,6)  -- (12,3);

\foreach \i in {(6,6),(9,3),(12,3),(10,3),(11,3)}
  \fill[black] \i circle (2pt);

\foreach \j in {0,3,7}
{
\foreach \i in {(1+\j,3),(0+\j,0), (1/2+\j,0), (1+\j,0), (3/2+\j,0), (2+\j,0)}
  \fill[black] \i circle (2pt);
\draw (1+\j,3)  -- (0+\j,0);
\draw (1+\j,3)  -- (1/2+\j,0);
\draw[dotted] (1+\j,3)  -- (1+\j,0);
\draw[dotted] (1+\j,3)  -- (3/2+\j,0);
\draw (1+\j,3)  -- (2+\j,0);
}
\end{tikzpicture}
\caption{The structure of the tree $T_X$ with $X\in\mct$.}\label{fig2}
\end{figure}
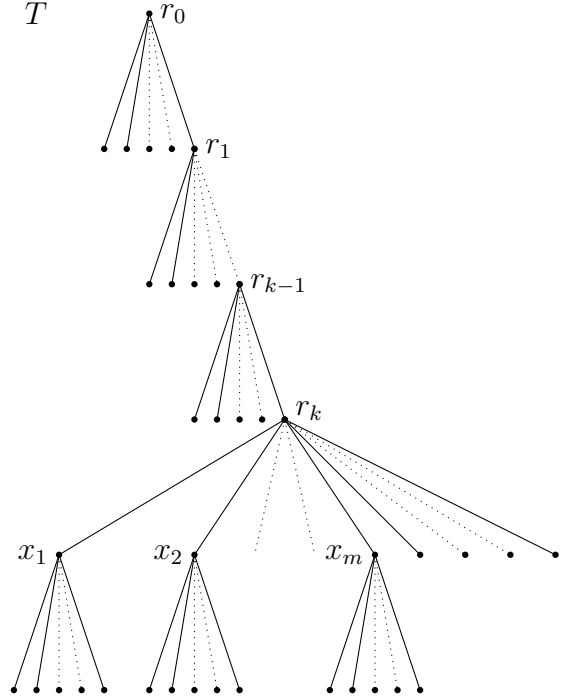

Denote by $\mct$ the class of all finite ultrametric spaces $X$ for which $T_X$ satisfies conditions (A) and (B) (See Figure~\ref{fig2}).

\begin{theorem}\label{t3}
Let $X \in \mathfrak D$ be a finite ultrametric space. Then the following statements are equivalent.
\begin{itemize}
  \item [(i)] The implication $(\overline{T}_X\simeq \overline{T}_Y) \Rightarrow (X\we Y)$ holds for every finite ultrametric space $Y \in \mathfrak D$.
  \item [(ii)] $X \in \mct$.
\end{itemize}
\end{theorem}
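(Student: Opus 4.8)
The idea is to translate the metric statement into a purely combinatorial one. Since $X\in\mathfrak D$, the tree $T_X$ carries the same information as the underlying rooted tree $\overline T_X$ together with the \emph{linear} order $\prec_X$ that the (pairwise distinct) labels put on the set $V_{\mathrm{int}}$ of internal nodes; this order is a linear extension of the ancestor order, with ancestors carrying the larger labels. The bridge I would establish first is: if $f\colon\Sp X\to\Sp Y$ is any strictly increasing bijection, then $f$ preserves diameters of all subsets, hence all the diametrical graphs occurring in the recursive construction, so the representing tree of $(X,f\circ d)$ has underlying rooted tree $\overline T_X$ and its label at every internal node $v$ equals $f(l_X(v))$. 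Combining this with Remark~\ref{r2} and Theorem~\ref{t2.6}, for $X,Y\in\mathfrak D$ with $\overline T_X\simeq\overline T_Y$ one obtains the following: $X\we Y$ if and only if there is a rooted-tree isomorphism $\Psi\colon\overline T_X\to\overline T_Y$ which is at the same time an order isomorphism from $(V_{\mathrm{int}},\prec_X)$ onto $(V_{\mathrm{int}},\prec_Y)$. I expect the whole proof to be organised around this equivalence.

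For (ii)$\Rightarrow$(i) I would argue much as in the proof of Theorem~\ref{t2}. Let $X\in\mct$ and let $Y\in\mathfrak D$ with $\overline T_X\simeq\overline T_Y$; since conditions (A) and (B) concern only the underlying tree, $T_Y$ satisfies them as well. By (A) the internal nodes of $T_X$ form a ``spine'' $r_0,r_1,\dots,r_k$ with $l(r_0)>l(r_1)>\cdots>l(r_k)$, one node per level, together with the level-$(n-1)$ nodes $x_1,\dots,x_m$, which are children of $r_k$ and have only leaf children; hence $l(r_k)>l(x_j)$ for all $j$, and similarly for $T_Y$. Define $f$ by sending the $i$-th largest internal label of $T_X$ to the $i$-th largest internal label of $T_Y$, and define $\Phi$ leaf by leaf: send the leaf children of $r_i$ bijectively to the leaf children of the $i$-th spine node of $T_Y$, and send the leaves below the $j$-th largest-labelled $x$ bijectively to the leaves below the $j$-th largest-labelled level-$(n-1)$ node of $T_Y$ (this is possible precisely because by (B) all the $x$-subtrees, and all their counterparts in $T_Y$, are mutually isomorphic). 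Using Lemma~\ref{l2}, i.e. that the distance between two leaves equals the label of their deepest common ancestor, a short case check over the position of this common ancestor shows that $(f,\Phi)$ realizes $X\we Y$.

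For (i)$\Rightarrow$(ii) I would prove the contrapositive: given $X\in\mathfrak D\setminus\mct$, produce $Y\in\mathfrak D$ with $\overline T_Y\simeq\overline T_X$ and $X\not\we Y$. In both cases I take $\overline T_Y=\overline T_X$ and only change the labels, keeping them pairwise distinct and decreasing from parents to children; by the equivalence above it then suffices to arrange that no automorphism of $\overline T_X$ is an order isomorphism between the orders induced by $l_X$ and by the new labelling $l_Y$. Suppose first that (A) holds but (B) fails; then the level-$(n-1)$ internal nodes $x_1,\dots,x_m$ split into at least two classes according to their number of children, every automorphism of $\overline T_X$ fixes the spine pointwise and permutes the $x_j$ within these classes, so such an automorphism can be an order isomorphism for a pair of labellings only if the two labellings induce the same sequence of class indices when the $x_j$ are listed in increasing label order; with at least two classes this sequence admits at least two arrangements, so I pick $l_Y$ realising a different one. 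Suppose next that (A) fails, and consider the invariant $\lambda(l)$ that lists, in increasing order of the internal labels, the corresponding levels; any tree isomorphism that is an order isomorphism of labels preserves levels and label order, hence preserves $\lambda$. When (A) holds $\lambda$ equals $(n-1,\dots,n-1,n-2,n-3,\dots,0)$ for every admissible labelling, but when (A) fails $\lambda$ is not constant: if the maximal internal nodes of $T_X$ (the maximal elements of the poset $V_{\mathrm{int}}$) do not all lie on one level this is immediate by changing which internal node receives the smallest label, and otherwise one uses the highest internal node $v$ of $T_X$ having at least two internal children, say $u_1,\dots,u_p$, and builds a labelling in which the whole subtree of $u_1$ receives the smallest labels with $u_1$ itself taking the largest of them and the next label going to a maximal internal node lying below some $u_i$ with $i\neq1$ (one without internal children); the resulting $\lambda$ fails to be non-increasing and therefore differs from the $\lambda$ of the labelling obtained level by level from the bottom. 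Choosing $l_Y$ with $\lambda(l_Y)\neq\lambda(l_X)$ completes this case.

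The delicate point is this last one, when (A) fails: one has to separate the situation in which the maximal internal nodes of $T_X$ occupy several levels from the one in which they occupy a single level, and in the latter case exhibit the correct ``non-monotone'' relabelling explicitly, which is where the combinatorial bookkeeping concentrates. The reduction lemma of the first paragraph is routine but must be stated with care, since it is exactly the step that converts the question about weak similarities into a question about order isomorphisms of labelled trees; everything else is either this reduction or the two relabelling constructions above.
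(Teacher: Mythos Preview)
Your argument is correct, and the explicit ``bridge'' lemma---that for $X,Y\in\mathfrak D$ one has $X\we Y$ if and only if some rooted-tree isomorphism $\overline T_X\to\overline T_Y$ is simultaneously an order isomorphism of the label orders on internal nodes---is a clean way to make the whole question purely combinatorial. The direction (ii)$\Rightarrow$(i) is essentially the paper's argument. For (i)$\Rightarrow$(ii) the paper proceeds differently and more tersely. To force condition (A), the paper simply exhibits $Y_1,Y_2\in\mathfrak D$ on the same underlying tree with $Y_1$ having labels strictly decreasing in level and $Y_2$ violating this; since this monotonicity is a weak-similarity invariant, $Y_1\not\we Y_2$. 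Your level-sequence invariant $\lambda$ refines this idea and, in your subcase (b), effectively supplies the explicit construction of such a $Y_2$ that the paper leaves to the reader; note that your construction really does yield a non-monotone $\lambda$ because, under your hypotheses, $u_1$ cannot itself be a maximal internal node (otherwise the tree would already have the $\mct$ shape, so (A) would hold). To force condition (B), the paper uses a different invariant: it builds $(X,d_1),(X,d_2)\in\mathfrak D$ on the same tree so that the unique smallest positive label sits at $v_1$, respectively $v_2$; any weak similarity must then carry $v_1$ to $v_2$ as sets, forcing equal offspring counts. Your argument via the ``class sequence'' of the level-$(n-1)$ nodes is a genuinely different route to the same conclusion and works just as well. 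In short: your systematic reduction buys a uniform framework at the cost of more casework, while the paper's two ad hoc invariants give a shorter proof but hide the common mechanism.
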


\begin{proof}
(i)$\Rightarrow$(ii). Let $T_X$ satisfy condition (i) and have $n$ levels. Suppose (A) does not hold for $T=T_X$. Then $n\geqslant 3$ and there are $Y_1, Y_2 \in \mathfrak D$ such that:
\begin{itemize}
  \item [$(a_1)$] $\overline{T}_{Y_1}\simeq\overline{T}_{X}\simeq\overline{T}_{Y_2}$;
  \item [$(a_2)$] the inequality $l_{Y_1}(u_1)<l_{Y_1}(v_1)$ holds whenever $u_1$ and $v_1$ are inner nodes of $\overline{T}_{Y_1}$ and the level of $v_1$ is strictly less than the level of $u_1$;
  \item [$(a_3)$] $\overline{T}_{Y_2}$ contains inner nodes $u_2$ and $v_2$ such that $l_{Y_2}(v_2)<l_{Y_2}(u_2)$ and the level of $v_2$ is strictly less than the level of $u_2$.
\end{itemize}
It follows from (i) and $(a_1)$, that $Y_1\we Y_2$. Hence $(a_2)$ contradicts $(a_3)$ because $(a_2)$ and $(a_3)$ are invariant under weak similarities.  Thus (i) implies (A) for $T=T_X$.

To prove $B$ we consider two inner nodes $v_1$ and $v_2$ of $\overline{T}_{X}$ on the level $n-1$. Denote by $d_0$ the original ultrametric on $X$. It is easy to find some ultrametric spaces $(X,d_1)$, $(X,d_2)\in \mathfrak D$ such that:
\begin{itemize}
  \item [$(b_1)$] $(X,d_0)$, $(X,d_1)$, $(X,d_2)$ have one and the same representing tree up to labeling;
  \item [$(b_2)$] $\diam(v_i)$ is the least strictly positive number from $\Sp{(X,d_i)}$, $i=1,2$.
\end{itemize}
It follows from (i) and $(b_1)$ that $(X,d_1)$ and $(X,d_2)$ are weakly similar. If $\Phi\colon (X,d_1)\to (X,d_2)$ is a weak similarity, then $(b_2)$ implies the equality $v_2=\Phi(v_1)$.
Consequently the numbers of offsprings of $v_1$ and $v_2$ are equal. Thus (B) follows from (i).

(ii)$\Rightarrow$(i). Let $X \in \mathfrak{T}$. Without loss of generality suppose $|X|\geqslant 2$. To prove (i) we consider an arbitrary $Y\in \mathfrak D$ such that $\overline{T}_{X}\simeq\overline{T}_{Y}$.

Let $V^i(T_X)$ and $V^i(T_Y)$ be the sets of the inner nodes of $(T_X,l_X)$ and $(T_Y,l_Y)$ respectively. From $X,Y\in \mathfrak D$ and $\overline{T}_{X}\simeq\overline{T}_{Y}$ it follows that there is $k\geqslant 1$ such that
$$
k=|\Sp{X}|-1=|\Sp{Y}|-1=|V^i(T_X)|=|V^i(T_Y)|.
$$
Let $V^i(T_X)=(X_1,\ldots,X_k)$, $V^i(T_Y)=(Y_1,\ldots,Y_k)$ and
$$
\Sp{X}\setminus \{0\} = \{r_1,...,r_k\}, \, \ \Sp{Y}\setminus \{0\} = \{t_1,...,t_k\}
$$
such that $l_X (X_i)=r_i$ and $l_Y (Y_i)=t_i$ for $i=1,...,k$ and
$$
r_1>r_2>\ldots r_k >0 \, \ \text{ and } \, \ t_1>t_2>\ldots t_k >0.
$$
Using (ii) and $\overline{T}_{X}\simeq\overline{T}_{Y}$ we see that the function $X_i \mapsto Y_i$, $i=1,\ldots,k$, can be extended to an isomorphism $\Phi\colon \overline{T}_{X}\to\overline{T}_{Y}$, and, in addition, the mapping
$$
X\ni x \mapsto \{x\} \mapsto \Phi(\{x\})=\{y\} \mapsto y \in Y
$$
is a weak similarity. Hence $\overline{T}_{X}\simeq\overline{T}_{Y}$ implies $X\we Y$.
\end{proof}

\section{Weak similarities of finite semimetric spaces}\label{s3}

Let  $(Y,\leqslant_Y)$ be a finite partially ordered set. A \emph{Hasse diagram} of the poset $(Y,\leqslant_Y)$ is a directed graph with the set of vertices $Y$ and the set of arcs (directed edges) $A_Y\subseteq Y\times Y$ such that the pair $\langle v_1,v_2 \rangle$ belongs to $A_Y$ if and only if $v_1\leqslant_Y v_2$, $v_1\neq v_2$ and the implication
$$
(v_1\leqslant_Y w\leqslant_Y v_2)\Rightarrow (v_1=w \vee v_2=w)
$$
holds for every $w\in Y$.

Two digraphs $(Y,A_Y)$ и $(X,A_X)$ are isomorphic, if there exists a bijection $F\colon X\to Y$ such that
$$
  (\langle x,y\rangle\in A_X)\Leftrightarrow(\langle F(x),F(y)\rangle \in A_Y)
$$
for all $x,y \in X$.
In this case $F$ is an isomorphism of the digraphs $(Y,A_Y)$ and $(X,A_X)$.
Every rooted tree $T$ can be considered as a digraph $(V(T),A_T)$, under assumption
$$
  (\langle u,v \rangle \in A_T) \Leftrightarrow  (u \text{ is a direct successor of } v).
$$

We will denote by $\mathcal H(Y)$ the Hasse diagram of a partially ordered set $(Y,\leqslant_Y)$.

Let $(X, d)$ be a semimetric space. Recall that a subset $B$ of $X$ is a \emph{ball} in $(X,d)$ if there exists $r \geq 0$ and $t \in X$ such that
$$
B=\{x\in X\colon d(x,t)\leq r\}.
$$
In this case we write $B=B_r(t)$. By $\mathbf{B}_X$ we denote the \emph{ballean} of $(X,d)$, i.e., the set of all balls in $(X, d)$.

It is clear that for every semimetric space $X$ the set $\mathbf{B}_X$ can be considered as a poset $(\mathbf{B}_X,\subseteq)$ with the partial order defined by the relation of inclusion $\subseteq$.

\begin{theorem}\label{t4}
Let $X$ and $Y$ be finite nonempty semimetric spaces. If $X\we Y$, then $\mathcal H(\mathbf{B}_X)$ and $\mathcal H(\mathbf{B}_Y)$ are isomorphic as directed graphs.
\end{theorem}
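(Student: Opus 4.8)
The plan is to show that the point bijection $\Phi$ itself, read as a map on subsets, sends balls of $X$ onto balls of $Y$; this makes it an order isomorphism of the balleans, from which the Hasse-diagram isomorphism is immediate.

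First I would normalize radii. Since $X$ is finite and $d(t,t)=0$, for every $r\geq 0$ the set $S=\{s\in\Sp{X}\colon s\leq r\}$ is finite and nonempty, so with $s_0=\max S$ we get $B_r(t)=B_{s_0}(t)$. Hence $\mathbf{B}_X=\{B_r(t)\colon t\in X,\ r\in\Sp{X}\}$, and likewise for $Y$. Next comes the key computation. Let $(f,\Phi)$ realize $X\we Y$, fix $t\in X$ and $r\in\Sp{X}$, and put $r'=f(r)\in\Sp{Y}$. Because $f\colon\Sp{X}\to\Sp{Y}$ is a strictly increasing bijection and $d(x,t)\in\Sp{X}$, for every $x\in X$ we have $d(x,t)\leq r\iff f(d(x,t))\leq r'\iff\rho(\Phi(x),\Phi(t))\leq r'$; since $\Phi$ is onto $Y$, this gives $\Phi(B_r(t))=B_{r'}(\Phi(t))=B_{f(r)}(\Phi(t))$, so $B\mapsto\Phi(B)$ maps $\mathbf{B}_X$ into $\mathbf{B}_Y$. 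Noting that $(f^{-1},\Phi^{-1})$ realizes $Y\we X$ and running the same argument, $B\mapsto\Phi^{-1}(B)$ maps $\mathbf{B}_Y$ into $\mathbf{B}_X$, and the two maps are mutually inverse. Therefore $\widehat{\Phi}\colon\mathbf{B}_X\to\mathbf{B}_Y$ with $\widehat{\Phi}(B)=\Phi(B)$ is a well-defined bijection.

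Then I would observe that since $\Phi$ is a bijection of the underlying point sets, $B_1\subseteq B_2\iff\Phi(B_1)\subseteq\Phi(B_2)$ for all $B_1,B_2\in\mathbf{B}_X$, so $\widehat{\Phi}$ is an isomorphism of the posets $(\mathbf{B}_X,\subseteq)$ and $(\mathbf{B}_Y,\subseteq)$. Finally, the covering relation of a finite poset is completely determined by the order relation, so any order isomorphism sends covering pairs to covering pairs in both directions; hence $\widehat{\Phi}$ is an isomorphism of the associated Hasse diagrams viewed as digraphs, which is exactly the required isomorphism of $\mathcal{H}(\mathbf{B}_X)$ and $\mathcal{H}(\mathbf{B}_Y)$.

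The only step that genuinely needs care is the ball computation: one must check that the image of a ball is \emph{exactly} a ball, not merely contained in one, which is where finiteness, the bijectivity of $\Phi$, and the strict monotonicity and bijectivity of $f$ are all used together. In the semimetric setting balls need not behave as tidily as in the ultrametric case, but the radius-normalization step removes any such pathology before the computation begins. The passage from an order isomorphism of the balleans to a digraph isomorphism of the Hasse diagrams is routine.
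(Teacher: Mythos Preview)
Your proof is correct and follows essentially the same approach as the paper: both define the isomorphism as $B\mapsto\Phi(B)$, normalize the radius of a ball to lie in the spectrum (the paper uses $r_0=\max_{x\in B}d(t,x)$, you use $s_0=\max\{s\in\Sp{X}:s\le r\}$, which serve the same purpose), verify $\Phi(B_r(t))=B_{f(r)}(\Phi(t))$ via the monotone bijection $f$, and then pass from the induced order isomorphism of balleans to the Hasse-diagram isomorphism. The only cosmetic difference is that the paper checks the covering-relation equivalence by hand, whereas you invoke the general fact that order isomorphisms preserve covers.
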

\begin{proof}
Let $d$ and $\rho$ be semimetrics on $X$ and $Y$ respectively and let $(f,\Phi)$ be a realization of $X\we Y$. We claim that the  posets $\mathcal H(\mathbf{B}_X)$ and $\mathcal H(\mathbf{B}_Y)$ are isomorphic with the isomorphism
\begin{equation}\label{iso}
\mathbf{B}_X \ni B \mapsto \Phi(B) \in \mathbf{B}_Y.
\end{equation}

Let us prove first that if  $B\in \mathbf{B}_X$, then $\Phi(B)\in \mathbf{B}_Y$.
Let $B=B_r(t)\in \mathbf{B}_X$ and let $r_0=\max\limits_{x\in B}d(t,x)$. It is evident that $B=B_{r_0}(t)$. Let us show that $\Phi(B)=B_{f(r_0)}(\Phi(t))\in \mathbf{B}_Y$. Suppose $y\in \Phi(B)$, then there exists $x\in B$ such that
\begin{equation}\label{e3.1}
y=\Phi(x).
\end{equation}

Consequently $d(x,t)\leqslant r_0$. According to equation~(\ref{e4.5}) and definition of $f$  we have the following relations
$$
\rho(\Phi(x), \Phi(t))=f(d(x,t))\leqslant f(r_0).
$$
The inequality $\rho(\Phi(x), \Phi(t))\leqslant f(r_0)$ and ~(\ref{e3.1}) imply that $y \in B_{f(r_0)}(\Phi(t))$.

Conversely let $y \in B_{f(r_0)}(\Phi(t))$. Since $\Phi$  is a bijection there exists $x\in X$ such that $y=\Phi(x)$. It is clear that $\rho(y,\Phi(t))\leqslant f(r_0)$. According to~(\ref{e4.5}) we have
$$
f(d(x,t))=\rho(\Phi(x),\Phi(t))\leqslant f(r_0).
$$
Since $f$ is strictly increasing we have $d(x,t)\leqslant r_0$ which is equivalent to $x\in B$. Consequently $y\in \Phi(B)$.
Analogously one can prove that for every $B\in \mathbf{B}_Y$ there exists $\bar{B}\in \mathbf{B}_X$ such that $\Phi(\bar{B})=B$.
The injectivity of~(\ref{iso}) follows from the fact that $\Phi$ is a bijection between $X$ and $Y$.
Thus mapping~(\ref{iso}) is a bijection between $\mathbf{B}_X$ and $\mathbf{B}_Y$.

To prove that mapping~(\ref{iso}) is an isomorphism between $\mathcal{H}(\mathbf{B}_X)$ and $\mathcal{H}(\mathbf{B}_Y)$ we have to establish the following equivalence
\begin{equation}\label{equiv}
\langle B_1,B_2\rangle \in A_{\mathcal H(\mathbf{B}_X)}\Leftrightarrow \langle \Phi(B_1),\Phi(B_2)\rangle \in A_{\mathcal H(\mathbf{B}_Y)}
\end{equation}
for all $B_1, B_2 \in \mathbf{B}_X$.
According to the definition of Hasse diagram the left part of~(\ref{equiv}) is equivalent to the conjunction of the following two conditions:
\begin{itemize}
  \item [A)] $B_1\subset B_2$,
  \item [B)] ($B_1\subseteq B \subseteq B_2) \Rightarrow (B_1= B \vee B_2=B)$.
\end{itemize}
Analogously, the right part of equivalence~(\ref{equiv}) is equivalent to the conjunction of conditions
\begin{itemize}
  \item [C)] $\Phi(B_1)\subset \Phi(B_2)$,
  \item [D)] ($\Phi(B_1)\subseteq \tilde{B} \subseteq \Phi(B_2)) \Rightarrow (\Phi(B_1)= \tilde{B} \vee \Phi(B_2)=\tilde{B})$.
\end{itemize}
The equivalence of conditions A) and C) follows directly from the fact that $\Phi$ is a bijection between $X$ and $Y$.
Suppose that the implication B)$\Rightarrow$D) does not hold. Consequently, there exists $\tilde{B}$ such that
$$
\Phi(B_1)\subseteq \tilde{B} \subseteq \Phi(B_2) \ \text{ and } \ \Phi(B_1)\neq \tilde{B} \neq \Phi(B_2).
$$
Since $\Phi$ is a bijection we have
$$
B_1\subseteq \Phi^{-1}(\tilde{B})\subseteq B_2 \ \text{ and } \   B_1 \neq \Phi^{-1}(\tilde{B}) \neq B_2,
$$
which contradicts condition B). The implication D)$\Rightarrow$B) can be proved analogously which completes the proof.
\end{proof}

Let $X$ and $Y$ be semimetric spaces. The mapping $F\colon X\to Y$ is ball-preserving if for every $Z\in \textbf{B}_X$ and $W\in \textbf{B}_Y$ the following relations hold
$$
F(Z)\in \textbf{B}_Y \,\text{ and } \, F^{-1}(W)\in \textbf{B}_X,
$$
where $F(Z)$ is the image of the set $Z$ under the mapping $F$ and $F^{-1}(W)$ is the preimage of the set $W$ under this mapping.

It was shown in~\cite{P(TIAMM)} that the representing rooted trees $T_X$ and $T_Y$ of finite ultrametric spaces $X$ and $Y$ are
isomorphic if and only if there exists a ball-preserving bijection $F\colon X \to Y$.

The proof of Theorem~\ref{t4} implies the following corollary.
\begin{corollary}\label{c4}
Let $X$ and $Y$ be finite semimetric spaces. Every weak similarity $\Phi\colon X\to Y$ is a ball-preserving bijection.
\end{corollary}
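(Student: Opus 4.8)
The plan is to read this corollary off the proof of Theorem~\ref{t4}, since essentially all the work has already been carried out there; only a brief matching-up of notation is needed. First I would recall that, by Definition~\ref{d4.5}, a weak similarity $\Phi\colon X\to Y$ is by hypothesis a bijection, so what remains is to verify the two ball-preservation conditions $F(Z)\in\mathbf{B}_Y$ for $Z\in\mathbf{B}_X$ and $F^{-1}(W)\in\mathbf{B}_X$ for $W\in\mathbf{B}_Y$.

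For the first condition, the proof of Theorem~\ref{t4} shows explicitly that if $B=B_r(t)\in\mathbf{B}_X$ and $r_0=\max_{x\in B}d(t,x)$, then $\Phi(B)=B_{f(r_0)}(\Phi(t))\in\mathbf{B}_Y$; this uses only equation~(\ref{e4.5}) together with the fact that the scaling function $f$ is strictly increasing. For the second condition, the same proof establishes that for every $W\in\mathbf{B}_Y$ there exists $\bar{B}\in\mathbf{B}_X$ with $\Phi(\bar{B})=W$. Since $\Phi$ is a bijection, the set-theoretic preimage $\Phi^{-1}(W)$ coincides with this $\bar{B}$, and hence lies in $\mathbf{B}_X$. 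Combining the two observations, $\Phi$ is a ball-preserving bijection.

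I do not expect any real obstacle here: the only point deserving a word of care is the identification of the image $F(Z)$ and preimage $F^{-1}(W)$ figuring in the definition of ball-preserving with the sets produced inside the proof of Theorem~\ref{t4}, and this identification is immediate from bijectivity of $\Phi$. (Alternatively, one could derive the corollary from the \emph{statement} of Theorem~\ref{t4} by noting that the isomorphism of Hasse diagrams there is realized by the assignment $B\mapsto\Phi(B)$, but extracting it from the proof is the shortest route.)
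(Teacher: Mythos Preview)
Your proposal is correct and matches the paper's approach exactly: the paper simply states that the corollary is implied by the proof of Theorem~\ref{t4}, and your write-up spells out precisely how the two ball-preservation conditions are extracted from that proof (the forward image via $\Phi(B)=B_{f(r_0)}(\Phi(t))$, and the preimage via the existence of $\bar B\in\mathbf{B}_X$ with $\Phi(\bar B)=W$ together with bijectivity of $\Phi$).
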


Recall that the \emph{indegree} $d_{G}(v)$ of a vertex $v$ in the directed graph $G$ is the number of arcs with the head $v$.
\begin{theorem}\label{t5}
Let $X$ and $Y$ be finite semimetric spaces. Then the Hasse diagrams $\mathcal H(\mathbf B_X)$ and $\mathcal H(\mathbf B_Y)$ are isomorphic as directed graphs if and only if there exists a bijective ball-preserving mapping $f\colon X\to Y$.
\end{theorem}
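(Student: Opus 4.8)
The plan is to prove both implications separately. The implication ``$\Leftarrow$'' is essentially a repetition of the proof of Theorem~\ref{t4}, so the substantial work is in ``$\Rightarrow$'', where a point-level bijection must be reconstructed from an abstract isomorphism of Hasse diagrams.

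For ``$\Leftarrow$'', suppose $f\colon X\to Y$ is a bijective ball-preserving mapping. I would show that $\mathbf{B}_X\ni B\mapsto f(B)\in\mathbf{B}_Y$ is an isomorphism of the digraphs $\mathcal H(\mathbf{B}_X)$ and $\mathcal H(\mathbf{B}_Y)$, verbatim as in the proof of Theorem~\ref{t4}. Indeed, $f(B)\in\mathbf{B}_Y$ holds by ball-preservation; the map is surjective because $f^{-1}(W)\in\mathbf{B}_X$ and $f\bigl(f^{-1}(W)\bigr)=W$ for every $W\in\mathbf{B}_Y$, and injective because $f$ is a bijection of the underlying sets; and the verification of the equivalences A)$\Leftrightarrow$C) and B)$\Leftrightarrow$D) in that proof uses only that the map on points is a set bijection, hence carries over without change.

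For ``$\Rightarrow$'', let $G\colon\mathbf{B}_X\to\mathbf{B}_Y$ be an isomorphism of the directed graphs $\mathcal H(\mathbf{B}_X)$ and $\mathcal H(\mathbf{B}_Y)$. First I would note that $G$ is in fact an isomorphism of the posets $(\mathbf{B}_X,\subseteq)$ and $(\mathbf{B}_Y,\subseteq)$: in a finite poset $a\leqslant b$ holds precisely when there is a directed path from $a$ to $b$ in its Hasse diagram (take a maximal chain in the interval $[a,b]$), and a digraph isomorphism preserves directed paths. Next, the singleton balls $\{x\}=B_0(x)$, $x\in X$, are exactly the minimal elements of $(\mathbf{B}_X,\subseteq)$ — equivalently, the vertices of indegree $0$ in $\mathcal H(\mathbf{B}_X)$ — since every ball contains the singleton of its centre while singletons are obviously minimal. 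As $G$ preserves minimal elements, it restricts to a bijection between the singletons of $X$ and those of $Y$, and this defines a bijection $f\colon X\to Y$ by $\{f(x)\}=G(\{x\})$. It then remains to prove that $f$ is ball-preserving.

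The key observation is that every ball is the join of its points: for $B\in\mathbf{B}_X$ the set of upper bounds of $\{\{x\}:x\in B\}$ in $(\mathbf{B}_X,\subseteq)$ is the set of balls containing $B$, whose least element is $B$ itself, so $B=\bigvee_{x\in B}\{x\}$. Since a poset isomorphism preserves joins, $G(B)=\bigvee_{x\in B}G(\{x\})=\bigvee_{x\in B}\{f(x)\}$; the last join exists (being the $G$-image of $B$) and, by the same observation applied in $Y$, equals the least ball of $\mathbf{B}_Y$ containing $f(B)$, so in particular $f(B)\subseteq G(B)$. Running this argument for the inverse poset isomorphism $G^{-1}$ and the induced bijection $f^{-1}$ gives $f^{-1}(W)\subseteq G^{-1}(W)$ for every $W\in\mathbf{B}_Y$; taking $W=G(B)$ yields $f^{-1}(G(B))\subseteq G^{-1}(G(B))=B$, i.e.\ $G(B)\subseteq f(B)$. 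Hence $f(B)=G(B)\in\mathbf{B}_Y$, and symmetrically $f^{-1}(W)=G^{-1}(W)\in\mathbf{B}_X$ for all $W\in\mathbf{B}_Y$, so $f$ is ball-preserving. The step I expect to be the real obstacle is exactly this last one: a priori $f(B)$ need not be a ball, and transporting the join only identifies $G(B)$ with the ``ball hull'' of $f(B)$; the device that closes the gap is to apply the argument simultaneously to $G$ and to $G^{-1}$ and intersect the two resulting inclusions.
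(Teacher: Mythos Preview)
Your proof is correct and follows the same overall architecture as the paper: the ``$\Leftarrow$'' direction is handled identically, and for ``$\Rightarrow$'' both you and the paper identify the singleton balls as the indegree-$0$ vertices, restrict the Hasse isomorphism to obtain the point bijection $f$, and then argue that $f(B)$ coincides with the image of $B$ under the Hasse isomorphism.

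The only substantive difference is in that last verification. You use join-preservation to get $f(B)\subseteq G(B)$ and then run the argument for $G^{-1}$ to sandwich the other inclusion. The paper avoids this detour: it simply observes that the set of indegree-$0$ predecessors of $B$ in $\mathcal H(\mathbf B_X)$ is exactly $\{\{x\}:x\in B\}$, and likewise the indegree-$0$ predecessors of $F(B)$ in $\mathcal H(\mathbf B_Y)$ are $\{\{y\}:y\in F(B)\}$. Since a digraph isomorphism carries the first set onto the second, one reads off $\{\{f(x)\}:x\in B\}=\{\{y\}:y\in F(B)\}$, i.e.\ $f(B)=F(B)$ in one stroke. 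Your join-plus-sandwich argument is perfectly valid and perhaps conceptually cleaner (it isolates exactly which order-theoretic property is doing the work), but the paper's ``minimal predecessors'' observation gets to $f(B)=F(B)$ directly without needing the reverse inclusion at all.
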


\begin{proof}
Suppose first that there exists $f\colon X\to Y$ which is bijective and ball-preserving. To prove that $\mathcal H(\mathbf B_X)\simeq \mathcal H(\mathbf B_Y)$ we have to establish the equivalence
$$
\langle B_1,B_2 \rangle \in A_{\mathcal H(\mathbf B_X)} \Leftrightarrow \langle f(B_1),f(B_2) \rangle \in A_{\mathcal H(\mathbf B_Y)}
$$
for all $B_1,B_2\in \mathbf B_X$. This may be proved similarly  to the proof of the second part of Theorem~\ref{t4}.

Conversely, suppose that $\mathcal H(\mathbf B_X)\simeq \mathcal H(\mathbf B_Y)$ with the isomorphism $F\colon \mathbf B_X\to\mathbf B_Y$. Denote by $B_X^1$ and $B_Y^1$ the sets of one-point balls of the spaces $X$ and $Y$. It is clear that the vertices of $\mathcal H(\mathbf B_X)$ and $\mathcal H(\mathbf B_Y)$ having zero indegree  coincide with $B_X^1$ and $B_Y^1$ respectively. Since $F$ is the isomorphism, the mapping
\begin{equation*}
f^*=F|_{B_X^1}\colon B_X^1\to B_Y^1
\end{equation*}
is a bijection. Define a mapping $f\colon X\to Y$ as follows
$$
f(x)=y \ \, \Leftrightarrow \ \, f^*(\{x\})=\{y\},
$$
which evidently is also a bijection. We claim that $f$ is ball-preserving.

It is clear that for $B\in \mathbf B_X$ the image $f(B)$ is  a ball in $Y$ if and only if the equality
\begin{equation}\label{eq11}
f^*\left(\bigcup\limits_{x\in B}\{\{x\}\}\right) = \bigcup\limits_{y \in B'} \{\{y\}\}
\end{equation}
holds  for some $B'\in \mathbf B_Y$.

Let $B$ be a ball in $X$. Denote by $G_B$ the directed subgraph of $\mathcal H(\mathbf B_X)$ induced by the set of all predecessors of $B\in V(\mathcal H(\mathbf B_X))$ and by $G_{F(B)}$ the directed subgraph of $\mathcal H(\mathbf B_Y)$ induced by the set of all predecessors of $F(B) \in V(\mathcal H(\mathbf B_Y))$. Since $\mathcal H(\mathbf B_X) \simeq \mathcal H(\mathbf B_Y)$ and $F$ is the isomorphism we have
\begin{equation}\label{eq12}
G_B\simeq G_{F(B)}.
\end{equation}

For all  $Z\in  V(\mathcal H(\mathbf B_X))$ ($W\in  V(\mathcal H(\mathbf B_Y))$)  define by $\Gamma_X(Z)$ ($\Gamma_X(W)$) the set of all predecessors of $Z\in V(\mathcal H(\mathbf B_Y))$  ($Z\in V(\mathcal H(\mathbf B_Y))$) having zero indegree.
By~(\ref{eq12}) we have $f^* (\Gamma_X(B)) = \Gamma_Y (F(B))$. To establish~(\ref{eq11}) it suffices to note that
$$
\Gamma_X(B)=\bigcup\limits_{x\in B}\{\{x\}\}  \ \text{ and  } \ \Gamma_Y(F(B))=\bigcup\limits_{y\in F(B)}\{\{y\}\}.
$$
These two equalities follow directly from the constructions of $\mathcal H(\mathbf B_X)$ and $\mathcal H(\mathbf B_Y)$ respectively.

The arguing for $f^{-1}$ is analogous.

\end{proof}

Theorems~\ref{t4} and~\ref{t5} imply the following.
\begin{corollary}\label{c55}
Let $X$ and $Y$ be finite semimetric spaces. If $X\we Y$ then there exists a bijective ball-preserving mapping $f\colon X\to Y$.
\end{corollary}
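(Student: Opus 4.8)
The plan is simply to compose the two preceding theorems. First I would apply Theorem~\ref{t4} to the hypothesis $X\we Y$: since $X$ and $Y$ are finite nonempty semimetric spaces, this immediately yields that the Hasse diagrams $\mathcal H(\mathbf B_X)$ and $\mathcal H(\mathbf B_Y)$ are isomorphic as directed graphs. With this isomorphism in hand, I would invoke the ``only if'' direction of Theorem~\ref{t5}, namely the implication asserting that an isomorphism of $\mathcal H(\mathbf B_X)$ and $\mathcal H(\mathbf B_Y)$ gives rise to a bijective ball-preserving mapping $f\colon X\to Y$. Concatenating these two implications gives exactly the claimed conclusion.

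Since both ingredients have already been proved, there is essentially no obstacle here; the only point to verify is that the hypotheses line up, i.e.\ that $X$ and $Y$ are finite (and nonempty), which is part of the statement. I note in passing that the conclusion also follows more directly from Corollary~\ref{c4}, which already says that the weak similarity $\Phi\colon X\to Y$ itself is a ball-preserving bijection, so one may take $f=\Phi$; but the route through Theorems~\ref{t4} and~\ref{t5} is the one recorded here, and it is the shorter to write down given what precedes.
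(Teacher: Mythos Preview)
Your proposal is correct and matches the paper's approach exactly: the corollary is recorded there simply as an immediate consequence of Theorems~\ref{t4} and~\ref{t5}, with no additional argument. Your side remark that Corollary~\ref{c4} already gives the conclusion directly (with $f=\Phi$) is also valid and worth noting.
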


In~\cite[Theorem 2.8]{DPT(Howrigid)} it was proved that the Hasse diagram of a metric space $X$ is a tree if and only if $X$ is an ultrametric space. Hence the main theorem from~\cite{P(TIAMM)} may be obtained as a corollary of Theorem~\ref{t5}.
\begin{corollary}\label{c5}
Let $X$ and $Y$ be finite ultrametric spaces. Then the representing trees $T_X$ and $T_Y$ are isomorphic as rooted trees if and only if there exist a bijective ball-preserving mapping $f\colon X\to Y$.
\end{corollary}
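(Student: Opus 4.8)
The plan is to deduce the corollary from Theorem~\ref{t5} together with the fact, proved in \cite[Theorem~2.8]{DPT(Howrigid)}, that the Hasse diagram of a finite ultrametric space is a tree; the whole point is to identify, for a finite ultrametric space $X$, the digraph $\mathcal H(\mathbf B_X)$ with the unlabeled representing tree $\overline T_X$. I would first describe the shape of $\mathcal H(\mathbf B_X)$. In an ultrametric space any two balls are either disjoint or nested, so $(\mathbf B_X,\subseteq)$ has the greatest element $X=B_{\diam X}(t)$ and each ball other than $X$ is covered by a unique ball. Hence $\mathcal H(\mathbf B_X)$ is a tree rooted at $X$ in which, with the orientation fixed in the paper, every arc runs from a ball to the ball covering it; this is precisely the digraph convention adopted for $\overline T_X$ (an arc from each node to its parent).

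The key step is to show that the vertex set and the arc set of $\mathcal H(\mathbf B_X)$ coincide with those of $\overline T_X$. Every node of $T_X$ is, by construction, a subset of $X$, and combining Theorem~\ref{t13} with Lemma~\ref{l2} one checks that each such node $A$ is a ball: for a non-root internal node, $A=B_r(t)$ for any $t\in A$ and any $r$ with $\diam A\leqslant r<l(A')$, where $A'$ is the parent of $A$; the root is $X=B_{\diam X}(t)$, and a leaf is a singleton ball. Conversely, given a ball $B_r(t)$, replacing $r$ by $r'=\max\{d(t,x):d(t,x)\leqslant r\}$ and following the path from the root of $T_X$ down to the leaf $\{t\}$ shows that $B_r(t)=B_{r'}(t)$ is one of the nodes on that path. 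Thus $\mathbf B_X$ and $V(\overline T_X)$ coincide as collections of subsets of $X$. It then remains to verify that for balls $A\subsetneq A'$ one has $A$ covered by $A'$ in $(\mathbf B_X,\subseteq)$ if and only if $A$ is a direct successor of $A'$ in $T_X$: since $A\subsetneq A'$ and both are nodes, $A$ lies in the subtree rooted at $A'$, and every node strictly between $A'$ and $A$ in $T_X$ would be a ball strictly between them, so the covering relation and the parent–child relation agree. Hence the digraphs $\mathcal H(\mathbf B_X)$ and $\overline T_X$ coincide, and likewise $\mathcal H(\mathbf B_Y)$ coincides with $\overline T_Y$.

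Finally I would assemble the statement. In the digraph $\mathcal H(\mathbf B_X)$ the root $X$ is the unique vertex of outdegree $0$, and a digraph isomorphism preserves outdegrees, so $\mathcal H(\mathbf B_X)\simeq\mathcal H(\mathbf B_Y)$ as directed graphs is the same as $\overline T_X\simeq\overline T_Y$ as rooted trees, which by definition is the same as $T_X\simeq T_Y$ as rooted trees. By Theorem~\ref{t5}, $\mathcal H(\mathbf B_X)\simeq\mathcal H(\mathbf B_Y)$ holds if and only if there exists a bijective ball-preserving map $f\colon X\to Y$, and the corollary follows. The main obstacle is the middle paragraph: one must be careful that no ball is missed or added when identifying $\mathbf B_X$ with $V(T_X)$, that the covering relation of the ball poset matches the parent–child relation of $T_X$, and that the orientation of the Hasse diagram agrees with the digraph convention used for rooted trees; once this dictionary is in place, the rest is a direct appeal to Theorem~\ref{t5} and \cite[Theorem~2.8]{DPT(Howrigid)}.
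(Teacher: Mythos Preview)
Your proposal is correct and follows exactly the route the paper intends: reduce to Theorem~\ref{t5} by identifying the Hasse diagram $\mathcal H(\mathbf B_X)$ with the unlabeled representing tree $\overline T_X$, invoking \cite[Theorem~2.8]{DPT(Howrigid)} for context. The paper itself gives no proof of the corollary beyond the one-sentence remark preceding it, so what you have written is precisely the verification the paper leaves implicit; your care in checking that $\mathbf B_X=V(T_X)$, that the covering relation coincides with the parent--child relation, and that the digraph orientation matches, is appropriate and not excessive.
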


\section{Acknowledgements}
The author is grateful to the anonymous referee for the numerous remarks, corrections, and suggestions which helped to improve the article. This publication is based on the research provided by the grant support of the State Fund For Fundamental Research (project F71/20570). The research was also partially supported by the Project 0117U006353 from the Department of Targeted Training of Taras Shevchenko National University of Kyiv at the NAS of Ukraine and by National Academy of Sciences of Ukraine within scientific research works for young scientists, Project 0117U006050.

\end{document}